\let\nc\newcommand
\let\rnc\renewcommand
\let\dmo\DeclareMathOperator
\def\bb@symb#1\f@@{\expandafter\nc\csname bb#1\endcsname{\mathbb{#1}}}
\nc\bb@symbols[1]{
  \@for\@current:=#1\do{\expandafter\bb@symb\@current\f@@}}
\def\cal@symb#1\f@@{\expandafter\nc\csname cal#1\endcsname{\mathcal{#1}}}
\nc\cal@symbols[1]{
  \@for\@current:=#1\do{\expandafter\cal@symb\@current\f@@}}
\def\frak@symb#1\f@@{\expandafter\nc\csname frak#1\endcsname{\mathfrak{#1}}}
\nc\frak@symbols[1]{
  \@for\@current:=#1\do{\expandafter\frak@symb\@current\f@@}}
\def\mym@th@p#1\f@@{\expandafter\dmo\csname#1\endcsname{#1}}
\nc\operators[1]{
  \@for\@current:=#1\do{\expandafter\mym@th@p\@current\f@@}}
\rnc{\Im}{\mathop{\mathrm{Im}}}
\rnc{\Re}{\mathop{\mathrm{Re}}}
\nc\diam[1]{\langle#1\rangle}
\nc\abs[1]{\left|#1\right|}
\nc\lowsim{\smash{\mathrel{\hbox{\lower.5ex\hbox{$\sim$}}}}}
\nc\highsim{\smash{\mathrel{\hbox{\raise.5ex\hbox{$\sim$}}}}}
\nc\tstrut[1][6]{\rule{0pt}{#1pt}}
\nc\arrowlim[2]{\mathop{\underset
    {\scriptstyle #1}{\underset{\raisebox{0ex}[0.25ex][-0.5ex]%
        {$#2$}}{\operatorname{lim}}}}}
\nc\invlim[1][]{\arrowlim{#1}{\longleftarrow}}
\nc\dirlim[1][]{\arrowlim{#1}{\longrightarrow}}
\nc\sdirlim{\rule[-1.5ex]{0pt}{0pt}\smash[b]{\dirlim}}
\nc\sinvlim{\rule[-1.5ex]{0pt}{0pt}\smash[b]{\invlim}}
\nc\hookright[1][]{\mathop{\overset{#1}%
    {\lhook\joinrel\longrightarrow}}}
\nc\hookleft[1][]{\mathop{\overset{#1}%
    {\longleftarrow\joinrel\rhook}}}
\nc\mapright[1]%
\nc\isomarrow{\mapright{\lowsim}}
\nc\mapleft[1]%
\nc\maprightonto[1]%
\nc\inject[1]{\hookright[#1]}
\let\surject\maprightonto
\nc\backsurject[1]%
\nc\ratmap{\mathrel{\smash-}\!\!\!\!\mathrel{\smash-}\rightarrow}
\let\N\bbN\let\Z\bbZ\let\Q\bbQ\let\C\bbC
\nc\Qbar{\overline{\bbQ}}
\nc\Ql{\bbQ_\ell}\nc\Qp{\bbQ_p}
\nc\Qlbar{\Qbar_\ell}\nc\Qpbar{\Qbar_p}
\nc\GalQ{\Gal(\Qbar/\Q)}
\nc\GalQl{\Gal(\Qlbar/\Ql)}
\nc\GalQp{\Gal(\Qpbar/\Qp)}
\nc\defeq{\mathop{:=}}
\rnc\i{^{-1}}
\nc\dual{^\vee}
\nc\mmu{\boldsymbol{\mu}}
\nc\ul{\underline}
\nc\ds{\displaystyle}
  \theoremstyle{plain} 
  \newtheorem{thm}[equation]{Theorem}
  \newtheorem{lem}[equation]{Lemma}
  \newtheorem{prop}[equation]{Proposition}
  \newtheorem*{thm*}{Theorem}
  \newtheorem*{lem*}{Lemma}
  \newtheorem*{cor*}{Corollary}
  \newtheorem*{prop*}{Proposition}
  \theoremstyle{definition}
  \theoremstyle{remark}
  \newtheorem{rem}[equation]{Remark}
  \newtheorem*{rem*}{Remark}
  \nc\tqed{\tag*{\hbox to 0pt{\qedsymbol\hss}}}
  \nc\noqed{\rnc{\qed}{}}
  \numberwithin{equation}{section}
  \rnc\theequation{\thesection.\arabic{equation}}
\rnc\theenumi{\roman{enumi}}
\rnc\labelenumi{(\theenumi)}
\nc\xymat[2][]{
  \entrymodifiers={++!!<0pt,.8ex>}
  \vcenter{\xymatrix#1{#2}}}
\nc\nosp[1][2.5]{\hspace{-#1em}}
\nc\flr[1]{*[l]{#1\nosp}}
\nc\arr{\ar@{{}-{>>}}}   
\nc{\har}{\ar@{{ (}->}}  
\nc{\hhar}{\ar@{{ )}->}} 
\nc\dar{\ar@{{}{--}>}}   
\nc\F{\bbF}
\nc\Fqbar{\overline{\F}_q}
\nc\Fq{\F_q}
\nc\Y[1]{Y_{\left\langle#1\right\rangle}}
\nc\ch{\mathop{\it char}}
\nc\Kbar{{\bar K}}
\nc\RH{\mathrm{RH}}
\nc\Ett{\tilde{E_t}}
\nc\WMC{weight-monodromy conjecture}
\theoremstyle{definition}
\newtheorem{wmc}[equation]{Weight-Monodromy Conjecture}
\rnc\i{^{-1}}
\title{Hypersurfaces and the Weil conjectures}
\author{A J Scholl}
\begin{document}
\maketitle
\abstract{We give a proof that the Riemann hypothesis for
  hypersurfaces over finite fields implies the result for all smooth
  proper varieties, by a deformation argument which does not use the
  theory of Lefschetz pencils or the $\ell$-adic Fourier transform.}

\section*{Introduction}

Suppose that $X\subset \bbP^{d+1}$ is a smooth geometrically
irreducible hypersurface over a finite field $\Fq$. It is well-known
\cite{Weil1,Katz} that in this case the Riemann hypothesis for the zeta function of $X$
is equivalent to the point-counting estimate:
$\abs{\#X(\bbF_{q^n})-\#\bbP^d(\bbF_{q^n})}=O(q^{nd/2})$. It would
therefore be extremely interesting to have an ``elementary'' proof of
this Diophantine estimate (see for example \cite[p.299]{Katz}), in the spirit
of Stepanov's method for curves \cite{Bom,Step}. We hasten to say that
we have no idea how to do this. What we show in this paper is a
curious, albeit entirely useless, related result (Theorem
\ref{thm:main} below): one can deduce the
Riemann hypothesis for \emph{all} proper smooth $\Fq$-schemes from the
Riemann hypothesis for smooth hypersurfaces, without using monodromy
of Lefschetz pencils or Fourier transform. It is tempting to conclude
from this that an elementary proof of the Riemann Hypothesis for
hypersurfaces is unlikely to exist.

The ingredients of the proof are: the existence of the Rapoport-Zink
vanishing cycles spectral sequence, de Jong's alterations results, and
Deligne's theorem on the local monodromy of pure sheaves on curves
\cite[\S1.8]{Weil2}. (It is worth noting that the proof of the
latter theorem, although ingenious, is very short.) Of course the
rules of the game do not permit the use of the hard Lefschetz theorem
or of any consequences of the theory of weights.

The idea of deforming to hypersurfaces comes from Ayoub's work on the
conservation conjecture for motives \cite[Lemma 5.8]{Ay}.  An outline
of the proof is as follows.  Suppose $X$ is a smooth proper
$\Fq$-scheme of dimension $d$. Then $X$ is birational to a (generally
singular) hypersurface $X'\subset \bbP^{d+1}$. Deform $X'$ to a family
of hypersurfaces $H\subset \bbP^{d+1}\times \bbA^1$ whose general
member is smooth. Suppose there is a smooth connected curve $T/\Fq$
and a finite morphism $T\to \bbA^1$ such that the basechange of $H$ to
$T$ has a semistable model, $f\colon E\to T$ say. Then almost all
closed fibres $E_t$, $t\in T$ are smooth hypersurfaces, so they
satisfy the Riemann hypothesis. Let $E_\Kbar$ be the geometric generic
fibre of $f$. Then Deligne's local monodromy theorem can be applied to
the sheaves $R^if_*\Ql$ to show that at the points of degeneration of
$f$, the monodromy and weight filtrations on $H^i(E\otimes\Kbar,\Ql)$ agree.

By construction, there will be some degenerate fibre $E_t$ which is a
normal crossings divisor, and one of whose components admits a
dominant rational map to $X$. Some piece of the cohomology of $X$
therefore contributes, via the Rapoport-Zink spectral sequence, to
$H^*(E\otimes{\Kbar},\Ql)$.  Using the equality of monodromy and
weight filtrations, a calculation using the spectral sequence then
shows that the piece of $H^d(X\otimes{\Fqbar},\Ql)$ which does
contribute is pure of weight $d$, and that the remaining piece comes
from varieties of lower dimension, hence is pure by induction on
dimension.

Since the existence of semi-stable models in finite characteristic is
unknown, to make a proof one has to work with alterations
\cite{deJ1,deJ2} instead; while complicating the details somewhat,
this does not add any essential further difficulty.

\emph{Acknowledgements.}  This paper was completed while the author
was visiting Fudan University, Shanghai, and I would like to thank
Qingxue Wang and the Mathematics Department for their hospitality. I
also wish to thank Luc Illusie and Qing Liu for helpful comments.

\emph{Notations and terminology:} $p$ and $\ell$ are prime numbers
with $\ell\ne p$. We write $\bbF$ for an algebraic closure of
$\bbF_p$. If $X$ is a scheme of finite type over some separably closed
field of characteristic different from $\ell$, then $H^*(X)$ denotes
its $\ell$-adic cohomology, and $H^*_c(X)$ its $\ell$-adic cohomology
with proper support. A \emph{variety} over an algebraically closed field $k$
will mean an integral separated scheme of finite type over $k$. A
morphism of varieties is an \emph{alteration} \cite{deJ1} if it is proper,
dominant and generically finite.

\section{Weights and monodromy}
\label{sec:weights}

In this section we review basic and well-known facts concerning the
Riemann hypothesis and the \WMC.

Let $X/\bbF$ be a scheme of finite type. Then $X$ comes by basechange
from a scheme $X_0/\Fq$ for some $q=p^r$, and so
$H^*(X)=H^*(X_0\times\Spec\bbF)$ carries an action of
$\Gal(\bbF/\Fq)$. Let $F_q\in\Gal(\bbF/\Fq)$ be the geometric
Frobenius, which is the inverse of the Frobenius substitution
$x\mapsto x^q$. Recall that a finite-dimensional $\Ql$-representation
$V$ of $\Gal(\bbF/\Fq)$ is \emph{mixed of weights $\ge w$}
(resp.~\emph{mixed of weights $\le w$}, \emph{pure of weight $w$}) if
for every eigenvalue $\alpha\in\Qlbar$ of $F_q$ on $V$ there exists an
integer $j\ge0$ (resp.~$j\le0$, $j=0$) such that, for every
isomorphism $\iota\colon \Qlbar \isomarrow \C$, one has
$\abs{\iota\alpha}=q^{(w+j)/2}$. (This implies in particular that
$\alpha$ is algebraic.) The Riemann hypothesis for a smooth proper
$\bbF$-scheme $X$ is the statement:

\begin{quote}
  $\RH(X)$: \emph{For all $i\in[0,2\dim X]$, $H^i(X)$ is pure of
  weight $i$.}
\end{quote}
Since a representation of $\Gal(\bbF/\Fq)$ is
pure (or mixed) if and only if for some $r\ge1$ its restriction to
$\Gal(\bbF/\bbF_{q^r})$ is (with the same weights), this depends only
on $X$, not on the model $X_0/\bbF_q$.
For any $d\in\N$ let $\RH(d)$ be the statement
\begin{quote}
  $\RH(d)$: \emph{For all proper smooth $X$ of dimension $\le d$,
  $\RH(X)$ holds.}
\end{quote}
The following reductions are completely standard arguments, but we
include the proofs just to be clear we are not smuggling in forbidden
ingredients.

\begin{lem}\label{lem:dev1}
  Suppose $\RH(X)$ holds for all smooth projective $\bbF$-varieties
  $X$ of dimension $\le d$. Then $\RH(d)$ holds.
\end{lem}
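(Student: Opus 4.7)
The plan is to use de Jong's alterations \cite{deJ1} to reduce from smooth proper $\bbF$-schemes to smooth projective $\bbF$-varieties, and then to invoke the Galois-equivariant injectivity of pullback under an alteration.

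First I will reduce to the case that $X$ is connected: a smooth scheme is the disjoint union of its connected components, each of which is irreducible (since smooth $\Rightarrow$ regular $\Rightarrow$ locally integral), and $\ell$-adic cohomology together with its Galois action decomposes as the direct sum over components. So we may assume $X$ is a smooth proper $\bbF$-variety of dimension $e\le d$, and fix a model $X_0/\Fq$. Next, apply de Jong's theorem to $X_0$: after possibly enlarging $q$, there exists a smooth projective $\Fq$-variety $X'_0$ of dimension $e$ and an alteration $\pi_0\colon X'_0\to X_0$. Base-changing to $\bbF$ gives an alteration $\pi\colon X'\to X$ with $X'$ a smooth projective $\bbF$-variety. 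By hypothesis $\RH(X')$ holds, so $H^i(X')$ is pure of weight $i$ for every~$i$.

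The crucial step is to produce a $\Gal(\bbF/\Fq)$-equivariant injection $\pi^*\colon H^i(X)\hookrightarrow H^i(X')$. Because both $X$ and $X'$ are smooth proper of the same dimension~$e$, Poincar\'e duality furnishes a Gysin (trace) map $\pi_*\colon H^i(X')\to H^i(X)$, and the projection formula gives $\pi_*\circ\pi^*=\deg(\pi)\cdot\mathrm{id}_{H^i(X)}$. Since $\pi$ is generically finite, $\deg(\pi)$ is a nonzero integer, hence $\pi^*$ is injective. The Galois-equivariance of $\pi^*$ (and of $\pi_*$) is automatic because everything descends from $\pi_0$ over $\Fq$. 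Since a subrepresentation of a representation pure of weight $i$ is itself pure of weight $i$, $H^i(X)$ is pure of weight $i$, which is $\RH(X)$.

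The only substantive ingredient is de Jong's theorem together with the existence of the trace map and the projection formula $\pi_*\pi^* = \deg(\pi)$ for an alteration between smooth proper varieties; these are standard in the $\ell$-adic formalism of SGA 4 and do not appear on the list of forbidden tools (hard Lefschetz, weights, Fourier transform, Lefschetz pencils). The main potential pitfall is simply making sure that $\pi$ and the traces are all defined over a common finite field so that the relevant Galois actions match.
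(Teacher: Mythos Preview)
Your proof is correct and follows essentially the same route as the paper: reduce to the irreducible case, apply de~Jong's alteration theorem to obtain a smooth projective $X'$ with an alteration $\pi\colon X'\to X$, and use the injectivity of $\pi^*$ to transport purity from $H^i(X')$ to $H^i(X)$. The paper's proof is a terse two-line version of yours; you have simply made explicit the trace-map argument for injectivity of $\pi^*$ and the bookkeeping needed to ensure Galois equivariance, both of which the paper leaves to the reader.
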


\begin{proof}
  Suppose $X$ is a smooth proper $\bbF$-scheme of dimension $\le
  d$. We may assume $X$ is irreducible. By \cite[Thm.~4.1]{deJ1} there
  exists an alteration $f\colon X'\to X$ where $X'$ is a smooth and
  projective variety, inducing an injection $f^*\colon H^*(X)
  \inject{} H^*(X')$.
\end{proof}

\begin{lem}\label{lem:dev2}
  Suppose $X$ is smooth and projective of dimension $d$, and that
  $\RH(d-1)$ holds. 
  \begin{enumerate}
  \item[(i)] If $H^d(X)$ is pure of weight $d$ then $\RH(X)$ holds.
  \item[(ii)] If $X'$ is a variety birationally equivalent to $X$, then
    $\RH(X)$ holds if and only if, for every $i$, $H^i_c(X')$ is mixed
    of weights $\le i$.
  \end{enumerate}
\end{lem}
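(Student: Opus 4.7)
For (i), Poincaré duality on the smooth proper $X$ gives a perfect pairing $H^i(X) \otimes H^{2d-i}(X) \to H^{2d}(X) = \Ql(-d)$, so purity of $H^i$ at weight $i$ and of $H^{2d-i}$ at weight $2d-i$ are equivalent; it suffices to treat $i \le d$. For $i = d$ the hypothesis is given. For $i < d$, I would re-embed $X$ via a sufficiently positive line bundle, apply Bertini over the infinite field $\bbF$ to obtain a smooth hyperplane section $Y \subset X$ of dimension $d-1$, and invoke weak Lefschetz (Artin, SGA~4) to produce an injection $H^i(X) \hookrightarrow H^i(Y)$. By $\RH(d-1)$, $H^i(Y)$ is pure of weight $i$, hence so is its subspace $H^i(X)$.

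For (ii), the goal is to show that ``$H^i_c(-)$ is mixed of weights $\le i$ for all $i$'' is birationally invariant modulo $\RH(d-1)$, and then to use Poincaré duality to upgrade the bound $\le d$ on $H^d(X)$ to purity. To this end I would first prove, by induction on $\dim Y$, the preliminary claim: under $\RH(d-1)$, every variety $Y$ of dimension $\le d-1$ has $H^i_c(Y)$ mixed of weights $\le i$ for all $i$. Given $Y$ of dimension $e$, pick a smooth dense open $U \subset Y$; open-closed excision combined with the inductive hypothesis on the lower-dimensional complement reduces the claim to $H^i_c(U)$. By de Jong, there is an alteration $\pi\colon \tilde U \to U$ with $\tilde U$ smooth and open in a smooth projective $\bar U$ of dimension $e$ whose boundary $D = \bar U \setminus \tilde U$ is a strict normal crossings divisor. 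Poincaré duality turns the trace identity $\pi_*\pi^* = \deg\pi$ on $H^{2e-i}$ into surjectivity of $\pi_*\colon H^i_c(\tilde U) \twoheadrightarrow H^i_c(U)$, so it suffices to bound weights on $H^i_c(\tilde U)$. The open-closed sequence for $\tilde U \subset \bar U$ then sandwiches $H^i_c(\tilde U)$ between $H^{i-1}(D)$ and $H^i(\bar U)$: the latter is pure of weight $i$ by $\RH(d-1)$, and the former has weights $\le i-1$ by applying the inductive hypothesis stratum-by-stratum to the NCD $D$, whose strata are smooth projective of dimension $< e$.

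With this preliminary in hand, I pick a dense open $U$ common to $X$ and $X'$ with closed complements $Z \subset X$ and $Z' \subset X'$, both of dimension $< d$. The open-closed long exact sequences for $(X, Z)$ (using $H^*(X) = H^*_c(X)$) and $(X', Z')$ sandwich $H^i_c(U)$ on both sides between groups supported on $Z$ and $Z'$, which by the preliminary have weights $\le$ their cohomological degree. Hence the three conditions ``$H^i(X)$ has weights $\le i$'', ``$H^i_c(U)$ has weights $\le i$'' and ``$H^i_c(X')$ has weights $\le i$'' are equivalent. The forward direction of (ii) is then immediate from $\RH(X)$. For the converse, weights $\le i$ on $H^i_c(X')$ for all $i$ propagate to weights $\le d$ on $H^d(X)$, and the perfect self-pairing $H^d(X) \otimes H^d(X) \to \Ql(-d)$ then forces $H^d(X)$ to be pure of weight $d$; part (i) concludes $\RH(X)$. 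The main obstacle is the preliminary dévissage: setting up the nested inductions on dimension correctly, and ensuring that every appeal to $\RH(d-1)$ is made to smooth projective varieties of dimension genuinely $\le d-1$, is where careful bookkeeping is required.
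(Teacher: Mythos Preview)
Your argument is correct and follows the same architecture as the paper's: part (i) via weak Lefschetz and Poincar\'e duality, part (ii) via the open--closed sequences for a common open $U$ together with the claim that $H^i_c$ of anything of dimension $<d$ has weights $\le i$ under $\RH(d-1)$. The paper compresses your entire ``preliminary d\'evissage'' into the single phrase ``By induction on dimension and $\RH(d-1)$''; what you have written is an honest unpacking of that sentence, and your use of de Jong's compactified alteration plus the trace identity is the right way to do it. One minor difference in the endgame: having established that $H^i(X)$ has weights $\le i$ for all $i$, the paper applies Poincar\'e duality in \emph{every} degree to get simultaneously that $H^{2d-i}(X)$ has weights $\ge 2d-i$, hence purity everywhere at once; you instead extract only purity of $H^d(X)$ from the self-pairing and then feed that back into part (i). Both routes are valid, but the paper's avoids the second appeal to weak Lefschetz.
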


\begin{proof}
  (i) Let $Y\subset X$ be a smooth hyperplane section. Then if $i<d$, by weak
  Lefschetz $H^i(X)$ injects into $H^i(Y)$, which is pure of weight
  $i$ by hypothesis. Poincar\'e duality then
  gives purity for $H^i(X)$ if $i> d$. 

  (ii) There exist nonempty open subschemes $U\subset X$,
  $U'\subset X'$ with $U\simeq U'$. Let $Y=X\setminus U$,
  $Y'=X'\setminus U'$. Consider the long exact sequences of cohomology
  with proper support
  \[
  \begin{CD}
    H^{i-1}(Y) @>>> H^i_c(U) @>>> H^i(X) @>>> H^i(Y)
    \\
    @. @V\simeq VV \\
    H^{i-1}_c(Y') @>>> H^i_c(U') @>>> H^i_c(X') @>>> H^i_c(Y')
  \end{CD}
  \]
  By induction on dimension and $\RH(d-1)$, we may assume that
  $H^i_c(Y')$ and $H^i(Y)$ are mixed of weights $\le i$ for every
  $i$. Therefore 
  \[
    \text{$H^i_c(X')$ is mixed of weights $\le i$ $\iff$ $H^i(X)$ is mixed of
    weights $\le i$.}
  \]
  By Poincar\'e duality this is equivalent to the condition:
  $H^{2d-i}(X)$ is mixed of weights $\ge 2d-i$. Combining these for all
  $i$ gives the result.
\end{proof}

\begin{lem}
  \label{lem:dev3}
  Let $X$, $X'$, be smooth projective varieties, and let $f\colon
  X'\to X$ be an alteration. Let $\Gamma\subset\Aut(X')$ be a finite
  subgroup such that $f$ factors as $X'\to X'/\Gamma \mapright{g} X$,
  where $g$ is a radicial alteration.  Assume $\RH(X)$ and
  $\RH(d-1)$. Then for every $i$, the invariant subspace
  $H^i(X')^\Gamma$ is pure of weight $i$.
\end{lem}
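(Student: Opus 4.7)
My plan is to prove the stronger fact that the pullback $f^* \colon H^i(X, \Ql) \to H^i(X', \Ql)^\Gamma$ is a Galois-equivariant isomorphism; purity in weight $i$ for the right-hand side is then immediate from $\RH(X)$. Writing $f = g \circ \pi$ with $\pi \colon X' \to X'/\Gamma$ the quotient map, it suffices to handle each factor separately.

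For $\pi$: since $\pi$ is finite, $R^q\pi_*\Ql$ vanishes for $q>0$, so the Leray spectral sequence collapses to $H^i(X'/\Gamma, \pi_*\Ql) \cong H^i(X', \Ql)$. Because $|\Gamma|$ is invertible in $\Ql$, taking $\Gamma$-invariants is exact and $(\pi_*\Ql)^\Gamma = \Ql_{X'/\Gamma}$, so one obtains the standard identification $\pi^* \colon H^i(X'/\Gamma, \Ql) \isomarrow H^i(X', \Ql)^\Gamma$. For $g$: being proper, surjective and radicial, it is a universal homeomorphism, and topological invariance of the étale site (SGA~4, VIII) then yields $g^* \colon H^i(X, \Ql) \isomarrow H^i(X'/\Gamma, \Ql)$ even though $X'/\Gamma$ may be singular.

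Composing gives the desired Galois-equivariant isomorphism, and $\RH(X)$ finishes the argument. I do not foresee a serious obstacle; the only points worth checking are that $X'/\Gamma$ exists as a projective scheme (it does, since $X'$ is projective and $\Gamma$ is finite) and that $\Gamma$ preserves the $\Fq$-structure of $X'$, so that $H^i(X', \Ql)^\Gamma$ inherits a well-defined Frobenius action — both of which should be ensured by the manner in which the triple $(X', \Gamma, f)$ is produced in the main construction. The hypothesis $\RH(d-1)$ does not seem to intervene in this direct approach; presumably it is present either in anticipation of an inductive use of the lemma, or because an alternative proof, e.g.\ via Lemma~\ref{lem:dev2}(ii) applied to a resolution of $X'/\Gamma$, would require it.
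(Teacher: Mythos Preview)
Your argument is clean and, read literally, correct: if $g$ is radicial in the usual sense (universally injective) then, being also proper and surjective between varieties, it is finite and hence a universal homeomorphism, so $g^*$ is an isomorphism on $\ell$-adic cohomology; combined with the standard identification $H^i(X'/\Gamma)\simeq H^i(X')^\Gamma$ this gives the result without ever touching $\RH(d-1)$.

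The catch is that the paper does not actually intend $g$ to be radicial everywhere. In the application (proof of Theorem~\ref{thm:main}) the morphism playing the role of $g$ is described as ``the composition of a modification and a radicial morphism'', and in the construction (Lemma~\ref{lem:ayoub-alter}) the only control is that $\kappa(E)^\Gamma/\kappa(Z)$ is purely inseparable. So ``radicial alteration'' here means an alteration whose function-field extension is purely inseparable --- generically radicial, not globally. Such a $g$ can perfectly well have positive-dimensional fibres (think of a blow-up followed by Frobenius), so it is not a universal homeomorphism and your key step fails. This is exactly why the paper's proof restricts to an open $U'$ on which $g$ is finite flat radicial, gets $H^i_c(U)\simeq H^i_c(U')^\Gamma$ there, and then invokes $\RH(d-1)$ through the equivariant form of Lemma~\ref{lem:dev2}(ii) to control the complement. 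Your closing guess about the role of $\RH(d-1)$ is thus on the mark, but for the lemma as it is actually used, that hypothesis is not optional.
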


\begin{proof}
  There exists a nonempty open $\Gamma$-invariant subscheme $U'\subset
  X'$ such that $U'\to U'/\Gamma$ is \'etale and $g\colon U'/\Gamma
  \to U=f(U')$ is finite, flat and radicial. Then $H^i_c(U) 
  \simeq H^i_c(U')^\Gamma$, and $H^i_c(U)$ is mixed of weights $\le i$ by
  Lemma \ref{lem:dev2}(ii). The obvious equivariant generalisation of
  \ref{lem:dev2}(ii) then implies that $H^i(X')^\Gamma$ is pure of weight $i$.
\end{proof}

\begin{lem}\label{lem:dev4}
  Let $X$, $X'$ be smooth projective varieties of dimension $d$, let
  $\Gamma$ be a finite group acting on $X'$, and let $f\colon
  X'/\Gamma \ratmap X$ be a dominant rational map. Assume $\RH(d-1)$
  holds. Then if $H^d(X')^\Gamma$ is pure of weight $d$, $\RH(X)$
  holds.
\end{lem}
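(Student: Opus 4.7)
The plan is to show that $H^d(X)$ is pure of weight $d$, whence $\RH(X)$ follows from Lemma \ref{lem:dev2}(i). Poincar\'e duality on the smooth projective variety $X$ gives $H^d(X)\cong H^d(X)\dual(-d)$, so the weights of $H^d(X)$ are symmetric about $d$, and it suffices to prove $H^d(X)$ is mixed of weights $\le d$.

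Set $X''=X'/\Gamma$, a projective (possibly singular) variety of dimension $d$. Since the quotient map $\pi\colon X'\to X''$ is finite and $\Gamma$ is a finite group, one has $H^i(X'',\Ql)=H^i(X',\Ql)^\Gamma$ for every $i$, so $H^d(X'')$ is pure of weight $d$ by hypothesis. The rational map $f\colon X''\ratmap X$ is dominant and generically finite of some degree $n$. Taking the closure of its graph in $X''\times X$, applying generic flatness to the projection to $X$, and then shrinking so that this projection is also quasi-finite, we obtain dense open subschemes $V\subset X$ and $U''\subset X''$ such that $f$ restricts to a finite flat surjective morphism $U''\to V$ of degree $n$. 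Since $n\ne0$ in $\Ql$, the trace identity $\mathrm{Tr}_f\circ f^*=n\cdot\mathrm{id}$ on $H^i_c(V)$ shows that $f^*\colon H^i_c(V)\to H^i_c(U'')$ is injective for every $i$.

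Now apply the excision long exact sequence for $U''\subset X''$ with closed complement $Y''=X''\setminus U''$ of dimension $\le d-1$:
\[H^{d-1}(Y'')\to H^d_c(U'')\to H^d(X'')\to H^d(Y'').\]
As in the proof of Lemma \ref{lem:dev2}(ii), $\RH(d-1)$ combined with alterations makes $H^i(Y'')$ mixed of weights $\le i$ for every $i$; together with purity of $H^d(X'')$ this forces $H^d_c(U'')$, and hence its subrepresentation $H^d_c(V)$, to be mixed of weights $\le d$. The analogous excision for $V\subset X$ with closed complement of dimension $\le d-1$, invoking $\RH(d-1)$ once more, then yields $H^d(X)$ mixed of weights $\le d$, as required. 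The only mildly delicate point is the construction of the finite flat restriction $U''\to V$, which is routine via the graph closure and generic flatness; the rest is bookkeeping with the two excision sequences and the Poincar\'e-duality symmetry.
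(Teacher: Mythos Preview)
Your proof is correct and follows essentially the same route as the paper. The paper works directly with a $\Gamma$-invariant open $U'\subset X'$ and the injection $H^*_c(U)\hookrightarrow H^*_c(U')^\Gamma$, then says ``reverse the argument of the previous lemma''; you instead pass to the quotient $X''=X'/\Gamma$, use $H^*(X'')=H^*(X')^\Gamma$, and write out the two excision sequences explicitly, focusing only on degree $d$ via the Poincar\'e-duality symmetry and Lemma~\ref{lem:dev2}(i). These are cosmetic variants of the same d\'evissage.
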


\begin{proof}
  One can find a $\Gamma$-invariant nonempty open subscheme $U'\subset
  X'$ and an open subscheme $U\subset X$ such that $f$ induces a
  finite flat morphism $U'\to U$, and then $f^*\colon
  H^*_c(U)\inject{} H^*_c(U')^\Gamma$. Then one can
  reverse the argument of the previous lemma.
\end{proof}

Now let $R$ be a Henselian discrete valuation ring, with finite
residue field of characteristic $p$, and field of fractions
$K$. Let $G_K$ denote the absolute Galois group of $K$ and $I_K$ its
inertia subgroup. Let $t_\ell\colon I_K \to \Z_\ell(1)$ be the
tame Kummer character, given by
\[
t_\ell(\sigma)=\Bigl( \sqrt[\ell^n]{\pi}^{\,\sigma-1} \Bigr)_{n\in\N}
\]
for any uniformiser $\pi$ of $R$.

Let $\rho\colon G_K \to \Aut(V)$ be a continuous finite-dimensional
$\Ql$-representation. Then one knows that there exists a finite
extension $K'/K$ such that $\rho(I_{K'})$ is unipotent. Moreover the
restriction of $\rho$ to $I_{K'}$ factors as $\chi \circ t_\ell$ for
some homomorphism $\chi\colon \Z_\ell(1) \to GL(V)$. One defines the
\emph{logarithm of monodromy} to be $N=\log(\chi)\colon V \to V(-1)$,
which can be regarded as a nilpotent endomorphism of $V$, with twisted
Galois action. There is then an associated \emph{monodromy filtration}
$(M_n)_{n\in\Z}$ on $V$, exhaustive and separated, which is uniquely
characterised by the properties:
\begin{itemize}
\item $N(M_n(V))\subset M_{n-2}(V)(-1)$
\item For every $j\in\N$, $N^j$ induces an isomorphism $\gr^M_j(V)
  \isomarrow \gr^M_{-j}(V)(-j)$.
\end{itemize}
The monodromy filtration is stable under $G_K$, and the action of
$I_{K'}$ on $\gr^M_*(V)$ is trivial.  Write $q$ for the order of the
residue field of $K'$, so that $\Gal(\bbF/\Fq)$ acts on
$\gr^M_*(V)$. Say that $V$ is \emph{monodromy-pure of weight $w\in\Z$}
if for every $n\in\Z$, $\gr^M_n(V)$ is pure of weight $n+w$.
Deligne's \WMC\ \cite{Hodge1} is then the statement:

\begin{wmc}\label{conj:wmc}
  Let $X/K$ be smooth and proper. Then for every $i$,
  $H^i(X\otimes \Kbar)$ is monodromy-pure of weight $i$.
\end{wmc}

\begin{lem}\label{lem:except0}
  Let $V$ be monodromy-pure of weight $w$, and let $G_\bullet$ be a
  filtration on $V$ by $G_K$-invariant subspaces such that
  \begin{enumerate}
  \item[(a)] for all $n\in\Z$, $N(G_nV) \subset (G_{n-2}V)(-1)$;
  \item[(b)] for every $n\ne 0$, $\gr_n^GV$ is pure of weight $w+n$. 
  \end{enumerate}
  Then $G_\bullet=M_\bullet$.
\end{lem}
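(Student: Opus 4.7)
My plan is to compare $G_\bullet$ with $M_\bullet$ by exploiting that the monodromy filtration is (a shift of) the weight filtration: $M_n V = W_{n+w}V$. From hypothesis (b) alone I can extract the ``easy'' inclusions: for $n\geq 1$, the quotient $V/G_n V$ has a filtration whose successive quotients are the $\gr^G_m V$ with $m>n\geq 1$, hence $m\ne 0$, each pure of weight $w+m>w+n$; so $V/G_n V$ has only Frobenius weights $>w+n$, forcing $M_n V=W_{w+n}V\subset G_n V$. The same reasoning gives $M_0 V\subset G_0 V$, and symmetrically for $n\leq -1$ one gets $G_n V\subset M_n V$.

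The crux is the reverse inclusion $G_n V\subset M_n V$ for $n\geq 0$. Suppose $v\in G_n V\setminus M_n V$, and let $m>n$ be minimal with $v\in M_m V$, so that $\bar v\in\gr^M_m V$ is nonzero. Choose $j=\lceil(n+1)/2\rceil$, so $n-2j\leq -1$. Iterating hypothesis (a), $N^j v\in G_{n-2j}V(-j)\subset M_{n-2j}V(-j)$ by the Stage 1 inclusion in negative degrees; combined with $N^j v\in N^j(M_m V)\subset M_{m-2j}V(-j)$ and the inequality $n-2j\leq m-2j-1$ (since $m\geq n+1$), this places $N^j v$ inside $M_{m-2j-1}V(-j)$. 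Thus the induced $\bar N^j\colon\gr^M_m V\to\gr^M_{m-2j}V(-j)$ sends $\bar v$ to zero. But $\bar N^j$ is injective on $\gr^M_m V$ once every intermediate $\bar N$-step starts in degree $\geq 1$, i.e.\ $m\geq 2j-1$; our choice of $j$ combined with $m>n$ guarantees this, and injectivity of $\bar N\colon\gr^M_k V\to\gr^M_{k-2}V(-1)$ for $k\geq 1$ is itself immediate from the defining iso $\bar N^k\colon\gr^M_k V\isomarrow\gr^M_{-k}V(-k)$. Contradiction.

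For the remaining inclusion $M_n V\subset G_n V$ with $n\leq -1$, I use the dual identity $M_{-k}V=N^k(M_k V)$ for $k\geq 0$. This follows from the defining iso $\bar N^k\colon\gr^M_k V\isomarrow\gr^M_{-k}V(-k)$ (which gives $N^k(M_k V)+M_{-k-1}V=M_{-k}V$) by iteration together with separatedness of $M_\bullet$: one obtains $M_{-k}V\subset N^k(M_k V)+M_{-k-L}V$ for every $L$, and $M_{-k-L}V=0$ for $L$ large. Since $M_k V=G_k V$ from the previous step and $G_\bullet$ is $N$-compatible, $M_{-k}V=N^k(G_k V)\subset G_{-k}V$. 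The whole argument thus reduces to the iso property of $\bar N^k$ on the graded pieces of $M_\bullet$; the main bookkeeping is the $j$-fold iteration of the Stage 2 argument and the dual formula for $n\leq -1$.
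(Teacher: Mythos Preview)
Your argument is correct, but it takes a different route from the paper's. Both proofs begin from the same observation that monodromy-purity makes $M_\bullet$ a (shifted) weight filtration, so weight considerations alone force the ``easy'' half-inclusions $G_nV\subset M_nV$ for $n\le -1$ and $M_nV\subset G_nV$ for $n\ge 0$. From there the paper and you diverge. The paper restricts to $G_{-1}V$ and to $V/G_0V$, where both filtrations coincide, obtaining an injection $\gr^G_{-j}V\hookrightarrow\gr^M_{-j}V$ and a surjection $\gr^M_jV\twoheadrightarrow\gr^G_jV$ for $j>0$; a single commutative square with $N^j$ then shows $N^j\colon\gr^G_jV\isomarrow\gr^G_{-j}V(-j)$, and the \emph{uniqueness} of the monodromy filtration finishes. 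You instead prove the two remaining half-inclusions by hand: for $G_nV\subset M_nV$ with $n\ge 0$ you push a hypothetical $v\in G_nV\setminus M_nV$ by $N^j$ (with $j=\lceil(n+1)/2\rceil$) into negative $G$-degree, where Stage~1 traps it in a lower $M$-step and contradicts injectivity of $\bar N^j$ on $\gr^M_mV$; for $M_{-k}V\subset G_{-k}V$ you establish and use the identity $M_{-k}V=N^k(M_kV)$.

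Your approach is longer but entirely self-contained: it never invokes the uniqueness characterisation of $M_\bullet$, and the identity $M_{-k}V=N^k(M_kV)$ you prove along the way is of independent interest. The paper's approach is shorter and more structural, since the single diagram plus uniqueness replaces both your Stage~2 contradiction and the Stage~3 formula. One small remark: your Stage~2 sentence ``combined with $N^jv\in N^j(M_mV)\subset M_{m-2j}V(-j)$'' is redundant---the inclusion $N^jv\in M_{n-2j}V(-j)\subset M_{m-2j-1}V(-j)$ already does all the work.
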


\begin{proof}
  Note that (a) implies that $I_K$ acts on $\gr_\bullet^GV$ by a
  finite quotient, so (b) makes sense. By assumption, $\gr^M_nV$ is
  pure of weight $w+n$. Assumption (b) therefore implies 
  that on $G_{-1}V$ the filtrations induced by $G_\bullet$ and
  $M_\bullet$ are equal, so that
  \begin{equation}\label{eq:GV1}
    n<0\implies \gr_n^GV=\gr_n^GG_{-1}V=\gr_n^MG_{-1}V\subset \gr_n^MV
  \end{equation}
  Dually, on $V/G_0V$ the filtrations induced by $G_\bullet$ and
  $M_\bullet$ are equal, and
  \begin{equation} \label{eq:GV2}
    n>0\implies \gr_n^MV\surject{}\ \gr_n^MV/G_0V=\gr_n^GV/G_0V= \gr_n^GV.
  \end{equation}
  Then for every $j>0$ we have a commutative diagram
  \[
  \xymat{
    \gr_j^G(V)\ar[d]_{N^j} & \gr_j^M(V)\ar[d]^{N^j}_\simeq\arr[l]
    \\
    \gr_{-j}^G(V)(-j)\har[r] & \gr_{-j}^M(V)(-j)
    }
  \]
  and therefore $N^j\colon \gr_j^G(V)\isomarrow
  \gr_{-j}^G(V)(-j)$. By the uniqueness of the monodromy filtration this
  implies $G_\bullet=M_\bullet$.
\end{proof}

We finally recall Deligne's theorem on the monodromy of pure sheaves
on curves.  Let $T/\F_q$ be a smooth curve, $U\subset T$ the
complement of a closed point $t\in T$. Let $\tilde\calO_{T,t}$ be
the henselised local ring, $K$ its field of fractions, and
$\bar\eta\colon \Spec \Kbar \to U$ the associated geometric point.

\begin{thm} {\cite[Thm 1.8.4]{Weil2}}
  \label{thm:localmono}
  Let $\calF$ be a lisse $\Ql$-sheaf on $U$ which is punctually pure
  of weight $w$ (i.e.~for every closed point $s\in U$, $\calF_{\bar
    s}$ is pure of weight $w$). Then the representation
  $\calF_{\bar\eta}$ of $\Gal(\Kbar/K)$ is monodromy-pure of weight
  $w$.
\end{thm}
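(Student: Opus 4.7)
The plan is to trade the local statement about monodromy on $V:=\calF_{\bar\eta}$ for a global weight bound on $H^*_c$ of the open curve, via the identification of $V^{I_K}$ with a stalk of $\bar j_*\calF$ on a compactification of $T$. First I would replace $K$ by a finite extension so that $I_K$ acts unipotently on $V$, giving the nilpotent monodromy logarithm $N\colon V\to V(-1)$. By the defining isomorphism $N^j\colon \gr^M_jV \isomarrow \gr^M_{-j}V(-j)$, purity of $\gr^M_jV$ of weight $w+j$ is equivalent to purity of $\gr^M_{-j}V$ of weight $w-j$, so it suffices to prove purity of $\gr^M_jV$ for $j\le 0$. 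In view of the identifications $V^{I_K}=\ker N$ (sitting at the bottom of the monodromy filtration) and $V_{I_K}=\coker N$ (sitting at the top), this reduces to producing the two-sided weight estimate ``$V^{I_K}$ is pure of weight $w$'' (equivalently, by Poincar\'e duality on $V$, the corresponding estimate on $V_{I_K}$), and then propagating through the filtration via the $N^j$-isomorphism.

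To access the Frobenius weights on $V^{I_K}$ globally, I would compactify $T$ inside a smooth projective curve $\overline T$, let $\bar j\colon U\hookrightarrow \overline T$, and identify $V^{I_K}$ with the stalk $(\bar j_*\calF)_t$. The short exact sequence $0\to \bar j_!\calF\to \bar j_*\calF\to \bigoplus_{x\in \overline T\setminus U}i_{x,*}V_x^{I_{K_x}}\to 0$ yields a long exact sequence coupling $H^*_c(U,\calF)$, $H^*(\overline T,\bar j_*\calF)$, and the direct sum of the stalks $V_x^{I_{K_x}}$; Poincar\'e duality on $\overline T$ moreover exchanges $\bar j_!$ and $\bar j_*$ up to a Tate twist. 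Combined, these reduce the desired weight bound on $V^{I_K}$ to the single estimate that $H^1_c(U,\calF)$ is mixed of weights $\le w+1$.

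The main obstacle is precisely this last estimate. Without access to the $\ell$-adic Fourier transform or the main theorem of Weil~II, one has to produce it by more elementary geometric means --- for instance by realising $\calF$ (up to twist) as a direct summand of $R^if_*\Ql$ for an explicitly constructed smooth proper family $f\colon Y\to U$ whose total-space cohomology is controlled by Weil~I, or by a direct duality-based manipulation on $\overline T$ that compares $H^1_c(U,\calF)$ with $H^1_c(U,\calF^\vee)(1)$ and plays the punctual purity at closed points of $U$ against the two sides. Carrying out such a realisation for a general pure lisse $\calF$ is the genuine content of the theorem and the source of Deligne's short but ingenious proof in \cite[\S1.8]{Weil2}.
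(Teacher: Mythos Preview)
The paper does not prove this theorem; it is quoted (with the citation to \cite[Thm~1.8.4]{Weil2}) as one of the external inputs to the main argument, alongside de~Jong's alteration theorems and the Rapoport--Zink spectral sequence. There is therefore no proof in the paper against which to compare your sketch.

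That said, your outline of Deligne's argument contains a genuine error in the reduction step. You propose to show that $V^{I_K}=\ker N$ is \emph{pure} of weight $w$ and then ``propagate through the filtration via the $N^j$-isomorphism''. But $\ker N$ is not pure of weight $w$ even when $V$ is monodromy-pure: already for a single Jordan block of length $2$ one has $\ker N=\gr^M_{-1}V$, pure of weight $w-1$. In general the induced filtration on $\ker N$ satisfies $\gr^M_{-j}(\ker N)\simeq N^jP_j$, of weight $w-j$, so $\ker N$ is genuinely mixed. What Deligne's Lemma~1.8.1 actually yields is only the one-sided estimate that $V^{I_K}$ has weights $\le w$ (and dually $V_{I_K}$ has weights $\ge w$); these two bounds together still do not force monodromy-purity, as one sees by taking a direct sum of two $2$-dimensional monodromy-pure representations of weights $1$ and $3$, which satisfies both bounds with $w=2$ yet has $\gr^M_1$ of mixed weights $2$ and $4$. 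The passage from the one-sided estimate to full monodromy-purity requires applying 1.8.1 not only to $\calF$ but to its tensor constructions: for instance, if $v\in P_j$ has Frobenius eigenvalue $\alpha$, then $v\wedge Nv\wedge\cdots\wedge N^jv$ lies in $\bigl(\bigwedge^{j+1}V\bigr)^{I}$ with eigenvalue $q^{-j(j+1)/2}\alpha^{j+1}$, and 1.8.1 applied to $\bigwedge^{j+1}\calF$ (punctually pure of weight $(j+1)w$) gives the sharp bound $\lvert\alpha\rvert\le q^{(w+j)/2}$. Your sketch correctly locates the analytic input (the weight bound on $H^1_c(U,\calF)$, equivalently on $V^{I_K}$) but misstates both its form and the linear-algebra step that turns it into monodromy-purity.
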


\section{Vanishing cycles}

We first review Rapoport-Zink's weight spectral sequence \cite{RZ},
which is the $\ell$-adic version of Steenbrink's spectral sequence in
Hodge theory \cite{Stb}. A detailed account of all of this theory can
be found in \cite{Ill}. 

Let $S$ be a regular scheme of dimension 1, and $f\colon X\to S$ a
proper morphism. Recall that $f$ is \emph{strictly semistable} if $f$
is flat and generically smooth, $X$ is regular, and for each closed
point $s\in S$ the fibre $X_s$ is a reduced divisor with strict normal
crossings (i.e.,~the irreducible components of $X_s$ are smooth over
$s$ and intersect transversally).

Let $R$ be a Henselian DVR, and $K$ its field of fractions.  Assume
that its residue field $k$ 
has characteristic different from $\ell$. Let $\calY\to \Spec R$ be
proper and strictly semistable, with $\calY$ integral. Let $d$ be the
relative dimension of $f$, and write $Y=\calY\otimes \bar k$ for the
geometric special fibre. Thus $Y=\bigcup_{0\le i\le N}Y_i$ where each
$Y_i$ is smooth and proper, of dimension $d$. As usual we write
\begin{align*}
  Y_I=\bigcap_{i\in I} Y_i,&\qquad\emptyset\ne I\subset
  \{0,\dots,N\}
  \\
  \Y{m}=\bigsqcup_{\abs{I}=m+1}Y_I,&\qquad 0\le m\le d
\end{align*}
(our numbering differs from that of Rapoport and Zink, for whom this
would be $\Y{m+1}$). The scheme $\Y{m}$, if nonempty, is proper and
smooth of dimension $d-m$. For $a,b,n\in\Z$ set
\[
{}^nC^{a,b}=
\begin{cases}
  H^{n+2b}(\Y{a-b})(b) &\text{ if $a\ge 0\ge b$}
  \\
  0& \text{otherwise}  
\end{cases}
\]
Let
\[
\rho=\rho^{a,b,n}\colon {}^nC^{a,b} \to {}^nC^{a+1,b}
\]
be the alternating sum of the restriction maps, and
\[
\gamma=\gamma^{a,b,n}\colon {}^nC^{a,b} \to {}^nC^{a,b+1}
\]
the alternating sum of the Gysin maps.  Then
these maps make ${}^nC^{a,b}$ into a cohomological double complex; let
$({}^mC^n,d_C)$ be the associated simple complex. 
The Rapoport-Zink spectral sequence is a spectral sequence
\[
E_1^{mn}={}^mC^n \implies H^*(\calY_{\Kbar}).
\]
There is a mapping $N\colon E\to E(-1)$ on the entire
spectral sequence, given in degree $1$ by the tautological map (the
identity map when both source and target are nonzero)
\[
N\colon {}^nC^{a,b} \to {}^{n-2}C^{a+1,b+1}(-1)
\]
and which on the abutment is the logarithm of monodromy operator. In
particular, the abutment filtration\footnote{%
  The increasing filtration, normalised so that
  $\gr_nE^k_\infty=E^{k-n,n}_\infty$.}
$G_\bullet$ on $E_\infty^*$ satisfies
$N(G_m)\subset G_{m-2}(-1)$. 

Poincar\'e duality induces isomorphisms $(E_1^{m,n})^\vee \simeq
E_1^{-m,2d-n}(d)$ and (up to sign) $d_1^{-m,2d-n}$ is the transpose of
$d_1^{m-r,n+r-1}$, hence also $(E_2^{m,n})^\vee \simeq
E_2^{-m,2d-n}(d)$. (In fact the whole spectral sequence is compatible
with Poincar\'e duality on the generic and special fibres, but we will
not use this deeper fact --- see for example \cite{Sai}.)

For the rest of this section we assume that $k=\bbF$.  In this case,
the Weil conjectures for the varieties $\Y{m}$ imply that the spectral
sequence degenerates at $E_2$, and that its abutment filtration is the
weight filtration. The \WMC\ for $H^i(\calY_K)$ is then equivalent to
the statement: for every $j\ge0$ the map $N^j\colon E_\infty^{-j,i+j}
\to E_\infty^{j,i-j}(-j)$ is an isomorphism. Conversely, from the
\WMC\ one can deduce parts of the Weil
conjectures:

\begin{prop}
  \label{prop:main}
  Suppose that $\calY$ is \emph{projective} over $\Spec R$, and let
  $\calZ\subset\calY$ be a hyperplane section in general
  position. Assume:
  \begin{itemize}
  \item[(a)] $\RH(d-1)$ holds;
  \item[(b)] $\calY_K$ and $\calZ_K$ satisfy the \WMC.
  \end{itemize}
  Then the Rapoport-Zink spectral sequence degenerates at $E_2$, and
  $\RH(Y_i)$ holds for each component $Y_i$ of $Y$.
\end{prop}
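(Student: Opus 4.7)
The plan exploits the following rigidity: under the hypotheses, every entry of the $E_1$-page of the Rapoport--Zink spectral sequence for $\calY$ is pure of the expected weight, except for the summands $\bigoplus_i H^d(Y_i)$ of $H^d(\Y{0})$ sitting in $E_1^{0,d}$, and the \WMC\ (via Lemma~\ref{lem:except0}) pins these down. Since every stratum $\Y{m}$ with $m\ge 1$ is smooth projective of dimension $\le d-1$, $\RH(d-1)$ makes its $\ell$-adic cohomology pure, and inspection of the bigrading shows that the only entry of the $E_1$-page into which the potentially mixed piece $\bigoplus_i H^d(Y_i)$ contributes is $E_1^{0,d}$. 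Moreover, the smooth hyperplane section $Z_i:=Y_i\cap H$ is a component of the special fibre of $\calZ$, of dimension $d-1$, so $H^j(Z_i)$ is pure of weight $j$ by $\RH(d-1)$. Weak Lefschetz gives $H^j(Y_i)\hookrightarrow H^j(Z_i)$ for $j<d$, whence $H^j(Y_i)$ is pure of weight $j$ for $j<d$; Poincar\'e duality on the smooth projective $Y_i$ extends this to $j>d$. The only cohomology still uncontrolled is $H^d(Y_i)$.

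The key step applies Lemma~\ref{lem:except0} to $V:=H^d(\calY_{\Kbar})$, which is monodromy-pure of weight $d$ by the \WMC\ for $\calY_K$. Let $G_\bullet$ denote the abutment filtration on $V$ (which satisfies $N(G_m)\subset G_{m-2}(-1)$ by construction of the spectral sequence), and set $G'_n:=G_{n+d}$; condition~(a) transfers automatically. For each $n\ne 0$, the graded $\gr_n^{G'}V=E_\infty^{-n,\,n+d}$ is a subquotient of $E_1^{-n,\,n+d}$, and since $(-n,n+d)\ne(0,d)$ this entry involves only strata $\Y{k}$ with $k\ge 1$ and is therefore pure of weight $n+d$. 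Condition~(b) of the lemma holds, so $G'_\bullet=M_\bullet$; in particular, $E_\infty^{0,d}=\gr_0^{G'}V$ is pure of weight $d$.

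To upgrade purity of the subquotient $E_\infty^{0,d}$ to purity of each $H^d(Y_i)$ and to deduce degeneration at $E_2$, I would bring in the spectral sequence for $\calZ$. Since every stratum $Z_{\langle k\rangle}$ has dimension $\le d-1$, its $E_1$-entries are already all pure, so the abutment filtration is the weight filtration; the \WMC\ for $\calZ_K$ then yields the complete set of $N^j$-isomorphisms on $\calZ$'s spectral sequence. Comparing the two spectral sequences via the Gysin and restriction maps attached to $\calZ\hookrightarrow\calY$, and using Poincar\'e self-duality of each $H^d(Y_i)$ (which symmetrises its weights about $d$), one forces $\bigoplus_i H^d(Y_i)$ itself to be pure of weight $d$. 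Once every $E_1$-entry is pure of weight $n$, the higher differentials $d_r\colon E_r^{m,n}\to E_r^{m+r,n-r+1}$ for $r\ge 2$ vanish for weight reasons, yielding degeneration at $E_2$ and $\RH(Y_i)$.

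The main obstacle is this last step. Purity of the subquotient $E_\infty^{0,d}$ does not by itself pin down the weights of the larger mixed space $\bigoplus_i H^d(Y_i)\subset E_1^{0,d}$: ``wrong'' weights there could in principle cancel against the images of higher differentials coming from other pure entries of matching weight. The \WMC\ for $\calZ_K$ and the comparison of the two spectral sequences are what supply the additional rigidity needed to exclude this, but threading this correctly through the bigrading compatibly with the $N$-operator is the delicate bookkeeping.
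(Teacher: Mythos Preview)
Your setup and the application of Lemma~\ref{lem:except0} to $V=H^d(\calY_{\Kbar})$ are correct and coincide with what the paper does. The gap is exactly where you place it: getting from purity of $E_\infty^{0,d}$ to purity of $H^d(\Y{0})$ and to degeneration. The sketch you offer (restriction/Gysin comparison plus Poincar\'e self-duality of $H^d(Y_i)$) does not supply a mechanism ruling out the scenario in which a non-weight-$d$ piece of $H^d(\Y{0})$ survives to $E_2^{0,d}$ and is then annihilated by a higher differential of matching weight; purity of $E_\infty^{0,d}$ alone is compatible with that.

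The paper closes the gap by reversing the order: it proves degeneration at $E_2$ \emph{first}, via a dimension count in total degree $d-1$ (and dually $d+1$), and only afterwards invokes Lemma~\ref{lem:except0}. On the antidiagonal $m+n=d-1$ every $E_1$-term is already pure, so the abutment filtration on $H^{d-1}(\calY_{\Kbar})$ is the monodromy filtration, and hypothesis~(b) gives $N^r\colon E_\infty^{-r,d+r-1}\isomarrow E_\infty^{r,d-r-1}(-r)$. One checks that $E_\infty^{r,d-r-1}=E_2^{r,d-r-1}$, while $E_\infty^{-r,d+r-1}=\ker d_r^{-r,d+r-1}\subset E_2^{-r,d+r-1}$; hence $\dim E_2^{r,d-r-1}\le\dim E_2^{-r,d+r-1}$, with equality precisely when $d_r^{-r,d+r-1}=0$. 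The role of $\calZ$ is then very specific (this is Lemma~\ref{lem:critical-mono}): the $\calZ$-spectral sequence degenerates at $E_2$ by $\RH(d-1)$, the \WMC\ for $\calZ_K$ makes $N^r$ an isomorphism on ${}_ZE_2$, and weak Lefschetz on each stratum makes the restriction map $E_1\to{}_ZE_1$ injective in the relevant bidegrees (the cohomological degrees appearing are strictly below the middle). This forces $N^r\colon E_2^{-r,d+r-1}\to E_2^{r,d-r-1}(-r)$ to be injective, giving the reverse inequality and hence $d_r^{-r,d+r-1}=0$; Poincar\'e duality on $E_2$ then kills $d_r^{0,d}$ as well. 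With degeneration in hand, $E_\infty^{0,d}=E_2^{0,d}$ is pure by Lemma~\ref{lem:except0}, and since the adjacent terms $E_1^{\pm1,d}$ are pure of weight $d$, any impurity in $H^d(\Y{0})\subset E_1^{0,d}$ would persist to $E_2^{0,d}$, a contradiction.
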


(By ``hyperplane in general position'', we mean one that meets each
$\Y{m}$ transversally.)

\begin{proof}
  Assumption (a) and the proof of Lemma \ref{lem:dev2}(i) imply that if
  $(m,i)\ne(0,d)$ then $H^i(\Y{m})$ is pure of weight $i$. Therefore
  $E_1^{m,n}$ is pure of weight $n$ except possibly for $(m,n)=(0,d)$,
  and so for every $i\ne d$ the abutment filtration on $E_\infty^i$
  equals the monodromy filtration shifted by $i$.  Moreover, for
  $r\ge2$ the only differentials which can possibly be non-zero are
  those with source or target $E_r^{0,d}$, namely
  \begin{equation}
    \label{eq:diffs}
    \begin{CD}
      E_2^{-r,d+r-1}=E_r^{-r,d+r-1}
      @>{d_r^{-r,d+r-1}}>>
      E_r^{0,d} @>{d_r^{0,d}}>>
      E_r^{r,d-r+1}&=E_2^{r,d-r+1}
    \end{CD}
  \end{equation}
   We have
  $E_2^{r,d-r-1}=E_\infty^{r,d-r-1}$, since this group is not a source
  or target of any of the differentials \eqref{eq:diffs}. Also, since
  the abutment and weight filtrations on $E_\infty^{d-1}$ are equal,
  by hypothesis (b) the map
  \[
  N^r\colon E_\infty^{-r,d+r-1} \to E_\infty^{r,d-r-1}(-r).
  \]
  is an isomorphism. So
  \[
  \dim E_2^{r,d-r-1}=\dim E_\infty^{r,d-r-1}
  =\dim E_\infty^{-r,d+r-1}\le \dim E_2^{-r,d+r-1}
  \]
  with equality if and only if $d_r^{-r,d+r-1}=0$. By
  \ref{lem:critical-mono}(i) below we have equality, hence for each
  $r\ge2$, the differentials $d_r^{-r,d+r-1}$ vanish. The dual
  argument using \ref{lem:critical-mono}(ii) shows that the
  differentials $d_r^{d,0}$ also vanish. Therefore the spectral
  sequence degenerates at $E_2$.

  For the second assertion, it is enough to show that $H^d(\Y0)$ is pure of
  weight $d$.  Consider $V=H^d(\calY_\Kbar)$.  By hypothesis $V$
  satisfies the \WMC, hence $N^j\colon gr_j^W(V)\isomarrow
  gr_{-j}^W(V)(-j)$ for every $j\ge0$. The abutment filtration
  $G_\bullet$ on $V$ then satisfies the hypotheses of Lemma
  \ref{lem:except0} with $w=d$, and so $\gr^G_0V$ is also pure of weight
  $d$. Therefore $\gr_0^G(V)=E_2^{0,d}$ is pure of weight $d$. Then
  since $E_2^{0,d}$ is the middle homology of the complex
  \[
  \begin{split}
  \bigoplus_{a\ge0}H^{d-2a-2}(\Y{2a+1})(-a-1)
  \to
  \bigoplus_{a\ge0}H^{d-2a}(\Y{2a})(-a)
  \\\to
  \bigoplus_{a\ge1}H^{d-2a+2}(\Y{2a-1})(-a+1)
  \end{split}
  \]
  by hypothesis (a) we see that $H^d(\Y0)$ is also pure
  of weight $d$. 
\end{proof}

\begin{lem}
  \label{lem:critical-mono}
  (i) For every $m>0$ the map
  \[
  N^m\colon E_2^{-m,d+m-1} \to E_2^{m,d-m-1}(-m)
  \]
  is injective.

  (ii) For every $m>0$  the map
  \[
  N^m\colon E_2^{-m,d+m+1} \to E_2^{m,d-m+1}(-m).
  \]
  is surjective.
\end{lem}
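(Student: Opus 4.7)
My plan is to prove (i) directly and deduce (ii) from it by Poincar\'e duality: the isomorphism $(E_2^{m,n})^\vee\simeq E_2^{-m,2d-n}(d)$ recalled in \S2 intertwines $N$ with itself (a fact one checks directly on the explicit $(a,b)$-summands of $E_1$), and so identifies the map in (i) with the transpose of the map in (ii); injectivity in (i) therefore gives surjectivity in (ii).

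For (i), the idea is to exploit the hyperplane section $\calZ\subset\calY$, which has relative dimension $d-1$. Since every stratum $Z_{\diam{j}}$ of its special fibre has dimension at most $d-1$, the hypothesis $\RH(d-1)$ makes $E_1^{m',n'}(\calZ)$ pure of weight $n'$ for every $(m',n')$; all higher differentials $d_r$ then vanish for weight reasons, the Rapoport-Zink spectral sequence of $\calZ$ degenerates at $E_2$, and its abutment filtration on $H^*(\calZ_\Kbar)$ coincides with the weight filtration. Combined with the \WMC\ for $\calZ_K$ (hypothesis (b)), this yields, for every $m\ge0$, an isomorphism
\[
N^m\colon E_2^{-m,d+m-1}(\calZ)\isomarrow E_2^{m,d-m-1}(\calZ)(-m).
\]

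Next I compare $\calY$ with $\calZ$ via the restriction map of spectral sequences along the row $n=d+m-1$ of $E_1$. A summand of $E_1^{m',d+m-1}(\calY)$ has the form $H^{d+m-1+2b}(\Y{a-b})(b)$ with $a+b=m'$, $a\ge0\ge b$, and restriction targets the analogous summand for $\calZ$. Comparing the cohomological degree $d+m-1+2b$ with $\dim Z_{\diam{a-b}}=d-1-(a-b)$ shows that weak Lefschetz makes restriction an isomorphism on each summand when $m'\le-m-1$, and an injection at the critical column $m'=-m$. A three-term diagram chase along
\[
E_1^{-m-1,d+m-1}\to E_1^{-m,d+m-1}\to E_1^{-m+1,d+m-1},
\]
with the left vertical an isomorphism and the middle an injection, then gives injectivity of the induced restriction $E_2^{-m,d+m-1}(\calY)\to E_2^{-m,d+m-1}(\calZ)$. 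Since restriction commutes with $N^m$, composing with the $\calZ$-isomorphism above forces the map in (i) to be injective.

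The delicate point is the weak Lefschetz step: restriction is \emph{not} injective on the whole of row $n=d+m-1$. For instance the summand $H^{d-m+1}(\Y{m-1})$ of $E_1^{-m+1,d+m-1}(\calY)$ has target $H^{d-m+1}(Z_{\diam{m-1}})$, which vanishes on dimensional grounds. The chase must therefore be arranged so as to use only the source column and its left neighbour, and this is precisely what is available.
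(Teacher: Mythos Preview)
Your argument is correct and follows the paper's proof essentially verbatim: reduce (ii) to (i) by Poincar\'e duality, use $\RH(d-1)$ and the \WMC\ for $\calZ_K$ to make $N^m$ an isomorphism on the $\calZ$-side, and then prove injectivity of the restriction $\alpha\colon E_2^{-m,d+m-1}(\calY)\to E_2^{-m,d+m-1}(\calZ)$ via weak Lefschetz on the $E_1$-columns $m'=-m-1$ (isomorphism) and $m'=-m$ (injection). One small slip in your closing illustration: the target $H^{d-m+1}(Z_{\diam{m-1}})$ need not vanish (its dimension is $d-m$, so the group can be nonzero for $d-m\ge1$); the correct point is only that weak Lefschetz no longer controls the restriction in that column, which is all your argument requires.
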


\begin{proof}
  The second assertion follows from the first by Poincar\'e duality.
  Let $({}_ZE_r^{m,n})$ be the Rapoport-Zink spectral sequence
  associated to $\calZ$. It degenerates at $E_2$, since we are
  assuming the Weil conjectures in dimension $<d$. In the commutative
  square:
  \[
  \begin{CD}
    E_2^{-m,d-1+m} @>{N^m}>> E_2^{m,d-1-m}
    \\
    @V{\alpha}VV @VVV
    \\
    {}_ZE_2^{-m,d-1+m} @>{\lowsim}>{N^m}> {}_ZE_2^{m,d-1-m}
  \end{CD}
  \]
  the bottom map is an isomorphism by hypothesis (b). It is therefore
  enough to show that the map $\alpha$ is an injection. But $\alpha$ is
  induced by the vertical maps $\beta_j$ in the diagram
  \[
  \begin{CD}
    E_1^{-m-1,d-1+m} @>>> E_1^{-m,d-1+m} @>>> E_1^{-m+1,d-1+m}
    \\
    @V{\beta_{-1}}VV @V{\beta_{0}}VV @V{\beta_{1}}VV
    \\
    {}_ZE_1^{-m-1,d-1+m} @>>> {}_ZE_1^{-m,d-1+m} @>>> {}_ZE_1^{-m+1,d-1+m}
  \end{CD}
  \]
  We have for $j=0$ or $-1$
  \[
  E_1^{-m+j,d-1+m}=\bigoplus_{a\ge0} H^{d-m-2a+2j-1}(\Y{2a+m-j})(-a-m+j)
  \]
  and $\dim \Y{2a+m-j} = d-m-2a+j$, so by weak Lefschetz $\beta_j$ is
  an isomorphism for $j=-1$ and an injection for $j=0$. Therefore
  $\alpha$ is an injection.
\end{proof}

We need a variant of Proposition \ref{prop:main}, in which we are
given a finite subgroup $\Gamma\subset \Aut(\calY/R)$. Then $\Gamma$
acts on the Rapoport-Zink spectral sequence, and going
through all the steps of the previous proof, one obtains:

\begin{prop}
  \label{prop:main-G}
  Let $\calY/R$ be projective and strictly semistable, and $\Gamma\subset
  \Aut(\calY/R)$ a finite subgroup. Let $\calZ\subset \calY$ be a
  hyperplane section in general position. Assume the following hold:
  \begin{enumerate}
  \item[(a)] $\RH(d-1)$ holds.
  \item[(b)] $H^*(\calY_{\Kbar})^\Gamma$ and $H^*(\calZ_{\Kbar})$ satisfy the \WMC.
  \end{enumerate}
  Then the $\Gamma$-invariant part of the Rapoport-Zink spectral sequence for
  $H^*(\calY_{\Kbar})$ degenerates at $E_2$, and for every $i$,
  $H^i(\Y0)^\Gamma$ is pure of weight $i$.\qed
\end{prop}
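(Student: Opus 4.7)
The plan is to run the argument of Proposition \ref{prop:main} verbatim after taking $\Gamma$-invariants throughout. The action of $\Gamma$ on $\calY/R$ extends to the special fibre, permutes the components $Y_i$, acts on each $\Y m$, and hence acts functorially on the Rapoport--Zink spectral sequence, compatibly with $N$ and with the abutment on $H^*(\calY_\Kbar)$. Because $\Ql[\Gamma]$ is semisimple the functor $(-)^\Gamma$ is exact, so $((E_r^{m,n})^\Gamma)$ is a sub-spectral sequence converging to $H^*(\calY_\Kbar)^\Gamma$, and passage to $\Gamma$-invariants commutes with kernels, images, and graded pieces.

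First I would check that $(E_1^{m,n})^\Gamma$ is pure of weight $n$ whenever $(m,n)\ne(0,d)$. The $E_1$-terms are built from $H^j(\Y{m'})$: for $m'\ge 1$ one has $\dim\Y{m'}\le d-1$, so $\RH(d-1)$ gives purity directly; for $m'=0$ and $j\ne d$ the weak-Lefschetz argument of Lemma \ref{lem:dev2}(i) applied to each component $Y_i$ (a smooth hyperplane section has dimension $d-1$) together with $\RH(d-1)$ gives it. Purity of a Galois representation is inherited by any subrepresentation, so it passes to $\Gamma$-invariants.

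Next I would apply Lemma \ref{lem:critical-mono} as a black box. Its hypotheses are $\RH(d-1)$ and the \WMC\ for $\calZ_\Kbar$, both of which we have, and its proof makes no use of $\Gamma$; in particular it needs no $\Gamma$-equivariance of the restriction to $\calZ$, since the conclusion is the bare injectivity of $N^m\colon E_2^{-m,d+m-1}\to E_2^{m,d-m-1}(-m)$ (with dual surjectivity in the other range). By exactness of $(-)^\Gamma$ these properties immediately descend to the $\Gamma$-invariant page. The degeneration argument of Proposition \ref{prop:main} then proceeds unchanged on $\Gamma$-invariants: the only possibly nonzero higher differentials have source or target $(E_r^{0,d})^\Gamma$, and the dimension count combining the injectivity/surjectivity just established with the \WMC\ hypothesis for $H^*(\calY_\Kbar)^\Gamma$ (which identifies the dimensions of the images of $N^r$ on $(E_\infty)^\Gamma$) forces them all to vanish.

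Finally, for purity of $H^i(\Y0)^\Gamma$ when $i\ne d$ we again use the weak-Lefschetz input from step one; and for $i=d$ we apply Lemma \ref{lem:except0} to $V=H^d(\calY_\Kbar)^\Gamma$, which is monodromy-pure of weight $d$ by hypothesis (b), with $G_\bullet$ the abutment filtration. Its graded pieces $\gr_n^G V=(E_2^{n,d-n})^\Gamma$ for $n\ne 0$ are pure of weight $d$ by the preceding step, so the lemma forces $(E_2^{0,d})^\Gamma$ to be pure of weight $d$ as well. Interpreting this as the middle cohomology of the three-term complex displayed in the proof of Proposition \ref{prop:main} after taking $\Gamma$-invariants, and noting that the outer terms involve $H^*(\Y{m'})$ with $m'\ge 1$ and hence are pure of weight $d$ after Tate twist by $\RH(d-1)$, purity of $H^d(\Y0)^\Gamma$ follows. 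The only thing to watch throughout is that $(-)^\Gamma$ commutes with every construction in sight, but for $\Ql$-representations of a finite group this is automatic, so there is no real obstacle beyond bookkeeping.
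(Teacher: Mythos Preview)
Your proposal is correct and follows exactly the paper's approach, which is simply to run the proof of Proposition \ref{prop:main} after taking $\Gamma$-invariants, using exactness of $(-)^\Gamma$ and the fact that Lemma \ref{lem:critical-mono} needs no equivariance. One small slip: with the paper's normalisation $\gr_n^G H^d = E_\infty^{-n,d+n}$, so for $n\ne 0$ these are pure of weight $d+n$ (not $d$), which is exactly hypothesis (b) of Lemma \ref{lem:except0}; the conclusion that $(E_2^{0,d})^\Gamma$ is pure of weight $d$ then follows as you say.
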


\section{Deformation to hypersurfaces}
\label{sec:ayoub}

In this section we work over an algebraically closed field $k$
(ultimately $k=\bbF$), and all morphisms will be $k$-morphisms.

\begin{lem}
  \label{lem:ayoub-alter}
  Let $X$ be a smooth variety of dimension $d$.
  Then there exists:
  \begin{itemize}
  \item a projective strictly semistable morphism $g\colon E \to T$,
    where $T$ is a smooth curve;
  \item a finite subgroup $\Gamma\subset \Aut(E/T)$;
  \item a nonempty open subscheme $U\subset T$ and a family of smooth
    hypersurfaces $Z\subset \bbP^{d+1}\times U$, together with a
    dominant $U$-morphism $E\times_TU \to Z$, inducing a purely inseparable
    inclusion of function fields $\kappa(Z) \subset\kappa(E)^\Gamma$;
  \item a point $t\in T(k)$ and a dominant rational map from
    $E_t/\Gamma$ to $X$.
  \end{itemize}
\end{lem}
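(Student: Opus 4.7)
The plan is to assemble three ingredients: a classical birational trick realizing $X$ as a hypersurface, a pencil deformation, and de Jong's equivariant alteration theorem applied to the resulting one-parameter family.

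First, since $k=\bbF$ is perfect, $\kappa(X)$ is separably generated over $k$ of transcendence degree $d$. By the primitive element theorem one has $\kappa(X)=k(x_1,\dots,x_d)(y)$ where $y$ satisfies an irreducible separable polynomial over $k(x_1,\dots,x_d)$; clearing denominators and homogenizing exhibits $X$ as birational to a (possibly very singular) hypersurface $X_0\subset\bbP^{d+1}$. Next, I would deform $X_0$ by choosing a general smooth hypersurface $X_1\subset\bbP^{d+1}$ of the same degree and forming the pencil $H\subset\bbP^{d+1}\times\bbP^1$ spanned by $X_0$ and $X_1$. By Bertini the general member of this pencil is smooth, so the locus $W\subset\bbP^1$ parametrizing smooth fibres is a nonempty open subscheme, and by construction the fibre of $H$ over $0$ is $X_0$.

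The heart of the argument is then to apply de Jong's equivariant alteration theorem \cite{deJ2} to the restriction of $H$ over an affine chart $\bbA^1\subset\bbP^1$ containing $0$: this produces a smooth connected curve $T$, a finite surjection $T\to\bbA^1$, a finite group $\Gamma$ acting on a projective strictly semistable $T$-scheme $g\colon E\to T$, and a $\Gamma$-equivariant alteration $E\to H\times_{\bbA^1}T$ (with trivial $\Gamma$-action on the target) whose induced inclusion $\kappa(H\times_{\bbA^1}T)\subset\kappa(E)^\Gamma$ is purely inseparable. Setting $U\subset T$ equal to the preimage of $W\cap\bbA^1$ and $Z=H\times_{\bbA^1}U\subset\bbP^{d+1}\times U$ gives the required smooth-hypersurface family, and the dominant $U$-morphism $E\times_TU\to Z$ is just the restriction of the alteration, with the function-field condition inherited from step 3. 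Finally, picking any $t\in T(k)$ over $0\in\bbA^1$, the composition $E_t\to X_0\ratmap X$ factors through the $\Gamma$-quotient to produce the required dominant rational map $E_t/\Gamma\ratmap X$.

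The substantive obstacle is the third step: arranging the \emph{strict} semistability simultaneously with a finite group $\Gamma$ acting on $E$ so that $\kappa(E)^\Gamma$ is purely inseparable over the function field of the base-changed pencil. Both properties are needed downstream in order for Proposition \ref{prop:main-G} to apply and allow transfer of Riemann-hypothesis information from the smooth hypersurface fibres in $Z_u$ (for $u\in U$) to $X$ via the chain $E_t/\Gamma\ratmap X$; everything else in the statement (the birational presentation and the Bertini pencil) is elementary and the function-field compatibility propagates formally from the properties of the alteration produced by \cite{deJ2}.
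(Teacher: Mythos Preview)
Your proposal is correct and follows essentially the same route as the paper: realize $X$ birationally as a hypersurface $X_0$, put it in a pencil with smooth generic fibre, and then invoke de~Jong's equivariant semistable alteration theorem \cite[5.13]{deJ2} (applied with trivial initial group) to obtain $g\colon E\to T$, the group $\Gamma$, and the purely inseparable condition on $\kappa(E)^\Gamma$; the remaining bookkeeping (choice of $U$, of $t$ over $0$, and the factorization of $E_t\to X_0\dashrightarrow X$ through $E_t/\Gamma$) is exactly as in the paper. The only cosmetic difference is that you spell out the primitive-element/Bertini steps more explicitly, whereas the paper simply asserts the existence of $H_0$ and the pencil.
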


\begin{proof}
  There exists a birational morphism $p\colon X\to H_0$ where
  $H_0\subset \bbP^n$ is an integral hypersurface (in general
  singular). Choose a pencil $f\colon H \to \bbP^1$ of hypersurfaces
  whose generic fibre is smooth, and with $f\i(0)=H_0$.  We then apply
  the procedure of the last paragraph of the proof of
  \cite[5.13]{deJ2} to $f$, taking the group $G$ in
  \emph{loc.~cit.}~to be trivial. This gives a commutative diagram of
  varieties and dominant projective morphisms
  \[
  \begin{CD}
    E @>g>> T\\
    @VVV @VV\pi V \\
    H @>>f> \bbP^1
  \end{CD}
  \]
  in which $T$ is a smooth curve and $E$ is regular. The unlabelled
  vertical morphism is an alteration, and $g$ (which is obtained by
  repeated blowups from a ``$G'$-pluri nodal fibration'', in the
  terminology of \emph{loc.~cit.}) is strictly semistable. Finally
  there is a finite group $\Gamma$ (which is $N$ in de~Jong's
  notation) acting on $E$ covering the trivial actions on $H$ and $T$,
  such the extension $k(E)^\Gamma/k(H\times_{\bbP^1}T)$ is purely
  inseparable. We take $U\subset T$ sufficiently small such that
  $Z=H\times_{\bbP^1}U$ is smooth over $U$.  Finally, for any point
  $t\in\pi\i(0)$ we get a dominant rational map $E_t/\Gamma \to H_0\ratmap
  X$.
\end{proof}

\begin{rem}
  Ayoub's lemma \cite[Lemma 5.8]{Ay} is similar but stronger, since an
  alteration is not required in characteristic zero, as one may in
  that case appeal to semi-stable reduction \cite{KKMS}.
\end{rem}

We now suppose that $k=\bbF$. 

\begin{thm}\label{thm:main}
  Assume that $\RH(d-1)$ holds, and that $\RH(V)$ holds for every smooth
  hypersurface $V\subset \bbP^{d+1}_{\bbF}$. Then $\RH(d)$ holds.
\end{thm}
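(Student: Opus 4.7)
By Lemma~\ref{lem:dev1} we may assume $X$ is smooth projective of dimension $d$. Apply Lemma~\ref{lem:ayoub-alter} to $X$ to obtain a projective strictly semistable $g\colon E\to T$, a finite group $\Gamma\subset\Aut(E/T)$, an open $U\subset T$ carrying a family of smooth hypersurfaces $Z'\subset\bbP^{d+1}\times U$ with a dominant $U$-morphism $E_U\to Z'$ such that $\kappa(Z')\subset\kappa(E)^\Gamma$ is purely inseparable, and a point $t\in T(\bbF)$ together with a dominant rational map $E_t/\Gamma\ratmap X$. Put $R=\tilde\calO_{T,t}$, $\calY=E\times_T\Spec R$, and pick a hyperplane section $\calZ\subset\calY$ coming from a global hyperplane section of $E$ that meets every stratum of the special fibre transversally; shrinking $U$, we may assume $\calZ|_U\to U$ is smooth.

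The plan is to apply Proposition~\ref{prop:main-G} to $(\calY/R,\Gamma,\calZ)$ and then feed the output into Lemma~\ref{lem:dev4}. Hypothesis~(a) of the proposition is given, and for hypothesis~(b) I verify the \WMC{} for $H^*(\calY_{\Kbar})^\Gamma$ and $H^*(\calZ_{\Kbar})$ via Deligne's Theorem~\ref{thm:localmono}. The relevant lisse sheaves on $U$ are $(R^ig_*\Ql)^\Gamma$ and $R^ig_{\calZ*}\Ql$. For the latter, every closed fibre is smooth projective of dimension $d-1$, so $\RH(d-1)$ gives punctual purity of weight $i$. For the former, for each $u\in U(\bbF)$ the morphism $E_u\to E_u/\Gamma\to Z'_u$ has second factor finite and purely inseparable (shrinking $U$ if needed), and $Z'_u$ is a smooth hypersurface, so the hypothesis of the theorem gives $\RH(Z'_u)$. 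Lemma~\ref{lem:dev3} then yields that $H^i(E_u)^\Gamma$ is pure of weight $i$. Theorem~\ref{thm:localmono} applied at $t$ upgrades this punctual purity to the \WMC{} required by~(b).

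Proposition~\ref{prop:main-G} then gives that $H^d(\Y0)^\Gamma$ is pure of weight $d$, where $\Y0=\bigsqcup_j Y_j$ is the disjoint union of components of the special fibre of $\calY$. Decomposing into $\Gamma$-orbits of components,
\[
H^d(\Y0)^\Gamma=\bigoplus_\alpha H^d(Y_{j_\alpha})^{\Gamma_\alpha},
\]
so each summand $H^d(Y_{j_\alpha})^{\Gamma_\alpha}$ is pure of weight $d$, where $\Gamma_\alpha\subset\Gamma$ is the stabilizer of the chosen component $Y_{j_\alpha}$ in the orbit $\alpha$. Since $X$ is irreducible of dimension $d$ and every irreducible component of $E_t/\Gamma$ has dimension $d$ and is of the form $Y_{j_\alpha}/\Gamma_\alpha$, the dominant rational map $E_t/\Gamma\ratmap X$ must be dominant on at least one such component; applying Lemma~\ref{lem:dev4} with $X'=Y_{j_\alpha}$ and group $\Gamma_\alpha$ then yields $\RH(X)$.

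The main obstacle is hypothesis~(b); the rest is bookkeeping. The crucial observation is that the purely inseparable cover $E_u/\Gamma\to Z'_u$ of a smooth hypersurface, combined with Lemma~\ref{lem:dev3}, converts the Riemann hypothesis for smooth hypersurfaces (the essential input of the theorem) into punctual purity of the relevant invariant subsheaf on $U$; Deligne's local monodromy theorem then promotes this to monodromy-purity, which is the \WMC{}.
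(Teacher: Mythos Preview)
Your proof is correct and follows essentially the same route as the paper: apply Lemma~\ref{lem:ayoub-alter}, choose a generic hyperplane section, use $\RH(d-1)$ and Lemma~\ref{lem:dev3} together with the hypersurface hypothesis to get punctual purity of the sheaves $(R^ig_*\Ql)^\Gamma$ and $R^ig_{\calZ*}\Ql$ on $U$, feed this into Theorem~\ref{thm:localmono} to obtain the \WMC{} at $t$, apply Proposition~\ref{prop:main-G}, and conclude via Lemma~\ref{lem:dev4} after passing to a component and its stabiliser. One minor imprecision: the second factor $E_u/\Gamma\to Z'_u$ is in general only a radicial \emph{alteration} (a modification composed with a radicial map), not globally finite, but Lemma~\ref{lem:dev3} only uses the generic behaviour, so this does not affect the argument.
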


\begin{proof}
Let $X/\bbF$ be smooth and proper of dimension $d$. 
Choose $E\to T$ be as in the lemma. Fix a projective
embedding $E\inject{} \bbP^D\times T$, and choose a hyperplane
$L\subset \bbP^D$ which meets $E_t$ in general position. Let $W=E\cap
(L\times T)\mapright{g}T$ be the hyperplane section given by
$L$. Replacing $T$ by an open subscheme contaning $t$, and $E$ by its
inverse image by $f$, we may assume that for every $s\in U=T-\{t\}$
the fibres $H_{\pi(s)}$, $E_s$ and $W_s$ are smooth, and that the
morphism $E_s/\Gamma \to H_{\pi(s)}$ is the composition of a
modification and a radicial morphism. The sheaves
\[
\calF^i=R^if_*\Ql|_U,\qquad\calG^i=R^ig_*\Ql|_U
\]
and $(\calF^i)^G$ are therefore lisse $\Ql$-sheaves on $U$.  Let
$R=\tilde\calO_{T,t}$, $K=\Frac(R)$ and $\bar\eta\colon \Spec\Kbar
\to U$ as in \S\ref{sec:weights}.

\begin{prop}
  $H^i(E_{\bar\eta})^\Gamma$ and $H^i(W_{\bar\eta})$ are monodromy-pure of
  weight $i$.
\end{prop}

\begin{proof}
  Let $s\in U$ be a closed point.  By hypothesis $\RH(W_s)$ holds, and
  by \ref{lem:dev3} and the hypotheses,
  $H^i(E_s)^\Gamma$ is pure of weight $i$.  Thus the sheaves
  $(\calF^i)^\Gamma$ and $\calG^i$ are punctually pure of weight $i$,
  and Theorem \ref{thm:localmono} then implies that
  $H^i(E_{\bar\eta})^\Gamma=(\calF^i_{\bar\eta})^\Gamma$ and
  $H^i(W_{\bar\eta})=\calG^i_{\bar\eta}$ are monodromy-pure of weight
  $i$.
\end{proof}

Let $\Ett$ be the normalisation of $E_t$, and apply
Proposition \ref{prop:main-G} with $\calY=E\times_T\Spec R$, $\calZ=
W\times_T\Spec R$. We conclude that $H^i(\Ett)^\Gamma$ is
pure of weight $i$. Now let $X'\subset E_t$ be any component that
dominates $X$ under the rational map $E_t/\Gamma \to X$, and let
$\Delta\subset G$ be its stabiliser. Then $H^i(X')^\Delta \subset
H^i(\Ett)^\Gamma$. So $H^i(X')^\Delta$ is pure of weight
$i$, and so by Lemma \ref{lem:dev4}, the Riemann hypothesis holds for
$X$.
\end{proof}

\vspace*{0.5cm}
\noindent
Department of Pure Mathematics and Mathematical Statistics\\
Centre for Mathematical Sciences\\
Wilberforce Road\\
Cambridge\enspace CB3 0WB\\
England\\
\texttt{a.j.scholl@dpmms.cam.ac.uk}

\end{document}